\newcommand{\Z}{\mathbb{Z}}
\newcommand{\St}{\operatorname{St}}
\newcommand{\ind}{\operatorname{ind}}
\newcommand{\vol}{\operatorname{vol}}
\newcommand{\GL}{\operatorname{GL}}
\newcommand{\diag}{\operatorname{diag}}
\newcommand{\OO}{\mathcal{O}}
\newcommand{\whit}{\mathcal{W}}
\newcommand{\val}{\operatorname{val}}
\newcommand{\ball}{\mathcal{B}}
\newcommand{\one}{1}
\newcommand{\rest}{|}
\newcommand{\bs}{\backslash}
\newcommand{\abs}[1]{\left|{#1}\right|}
\newcommand{\norm}[1]{\lVert#1\rVert}
\newcommand{\K}{\mathbf{K}}
\newtheorem{theorem}{Theorem}[section]
\newtheorem{lemma}[theorem]{Lemma}
\newtheorem{proposition}[theorem]{Proposition}
\newtheorem{corollary}[theorem]{Corollary}
\theoremstyle{remark}
\newtheorem{remark}[theorem]{Remark}
\begin{document}

\title[Support of matrix coefficients]{On the support of matrix coefficients of supercuspidal representations of the general linear group
over a local non-archimedean field}
\author{Erez Lapid}
\date{\today}

\begin{abstract}
We derive an upper bound on the support of matrix coefficients of suprecuspidal representations of the general linear group
over a non-archimedean local field. The results are in par with those which can be obtained from the Bushnell--Kutzko
classification of supercuspidal representations, but they are proved independently.
\end{abstract}

\maketitle

\section{Introduction}
Throughout, let $F$ be a local non-archimedean field, $\OO$ its ring of integers, and let $G=\GL_r(F)$ be the general linear group of rank $r$
with center $Z$.
Let $\K=\GL_r(\OO)$ be the standard maximal compact subgroup of $G$ and for any $n\ge1$
let $K(n)=K_r(n)$ be the principal congruence subgroup, i.e.,
the kernel of the canonical map $\K\rightarrow\GL_r(\OO/\varpi^n\OO)$ where $\varpi$ is a uniformizer of $F$.
Denote by $\ball(n)$ the ball
\[
\ball(n)=\{g\in G:\norm{g},\norm{g^{-1}}\le q^n\}
\]
where $q$ is the size of the residue field of $F$ and $\norm{g}=\max\abs{g_{i,j}}_F$
where $g_{i,j}$ are the entries of $g$. Thus, $\{\ball(n):n\ge1\}$ is an open cover of $G$ by compact sets.

The purpose of this short paper is to give a new proof of the following result.\footnote{Throughout,
by a representation of $G$ we always mean a complex, smooth representation.}

\begin{theorem} \label{thm: main}
Let $(\pi,V)$ be a supercuspidal representation of $G$ and let $(\pi^\vee,V^\vee)$ be its contragredient.
Let $v\in V$, $v^\vee\in V^\vee$ and assume that $v$ and $v^\vee$ are fixed under $K(n)$ for some $n\ge1$.
Then the support of the matrix coefficient $g\mapsto (\pi(g)v,v^\vee)$ is contained in $Z\ball(c(r)n)$
where $c(r)$ is an explicit constant depending on $r$ only.
\end{theorem}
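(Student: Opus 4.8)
The plan is to pass to the Whittaker/Kirillov model of $\pi$, use the resulting integral formula for matrix coefficients, and control the support of Whittaker functions of $K(n)$-fixed vectors. Since $\pi$ is supercuspidal it is generic; fix a non-degenerate character $\psi$ of the standard maximal unipotent subgroup $N$ of conductor $\OO$ and realize $\pi$ in $\whit(\pi,\psi)$ and $\pi^\vee$ in $\whit(\pi^\vee,\bar\psi)$, writing $W_v,\widetilde W_{v^\vee}$ for the corresponding Whittaker functions. By Bernstein--Zelevinsky, the restriction of $\pi$ to the mirabolic subgroup $P$ is irreducible and isomorphic to the compact induction $\ind_N^P\psi$; hence $W_v\rest_P$ is compactly supported modulo $N$, the $P$-invariant pairing $\langle v,v^\vee\rangle=\int_{N\bs P}W_v(p)\widetilde W_{v^\vee}(p)\,dp$ converges, and by uniqueness of the $P$-invariant pairing it is proportional to the canonical pairing of $\pi$ with $\pi^\vee$, so
\begin{equation}\label{eq:mc}
(\pi(g)v,v^\vee)=c_\pi\int_{N\bs P}W_v(pg)\,\widetilde W_{v^\vee}(p)\,dp .
\end{equation}

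\emph{Reduction and a one-sided bound.} Note $\diag(\varpi^{m_1},\dots,\varpi^{m_r})\in Z\ball(N)$ iff $\max_i m_i-\min_i m_i\le N$; call this quantity the width. Since $K(n)\triangleleft\K$ and $(\pi(k_1gk_2)v,v^\vee)=(\pi(g)\pi(k_2)v,\pi(k_1^{-1})v^\vee)$ with the translated vectors again $K(n)$-fixed, the Cartan decomposition reduces the theorem to bounding $\operatorname{width}(a)$ for $a\in A^+$ (dominant diagonal matrices) with $(\pi(a)v,v^\vee)\neq0$ for some $v,v^\vee\in\pi^{K(n)}$. For Whittaker functions one has the elementary fact: if $v\in\pi^{K(n)}$ and $W_v(\diag(\varpi^{m_1},\dots,\varpi^{m_r}))\neq0$, then $m_{i+1}-m_i\le n$ for $1\le i\le r-1$ — writing $u\,t=t\,(t^{-1}ut)$ for $u$ in a simple root subgroup, if $t^{-1}ut\in K(n)$ then $W_v(ut)=W_v(t)$ by right $K(n)$-invariance, while $W_v(ut)=\psi(u)W_v(t)$ by left $(N,\psi)$-equivariance, and since $\psi$ is trivial exactly on $\OO$ this is compatible only when $m_{i+1}-m_i\le n$. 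Applying this to $\K$-translates of $v$, on the support of $W_v$ (as a function on $N\bs G=N\bs NA\K$) every Iwasawa-diagonal part has consecutive exponents differing by at most $n$.

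\emph{The key estimate.} The one-sided bound only confines $\operatorname{supp}W_v$ to a fattened cone; that this cone is cut down to a region of width $O_r(n)$, uniformly over all supercuspidal $\pi$, is where supercuspidality enters essentially (it fails for principal series), and I expect this to be the main obstacle. I would prove it via the local functional equation. For $r=2$: in the Kirillov model $\pi\rest_P\cong C_c^\infty(F^\times)$, the one-sided bound reads $\xi_v(x):=W_v(\diag(x,1))=0$ unless $\val(x)\ge-n$, and the same holds for $\pi(w)v$ (with $w\in\K$ the Weyl element, which normalizes $K(n)$). Plugging these into the Jacquet--Langlands functional equation for $\pi\otimes\chi$, where $L(s,\pi\otimes\chi)\equiv1$ and $\gamma(s,\pi\otimes\chi,\psi)=\varepsilon_0\,q^{-\arithcond(\pi\otimes\chi)(s-1/2)}$ is a monomial in $q^{-s}$, forces simultaneously $\arithcond(\pi\otimes\chi)\le 2n$ for all $\chi$ and $\operatorname{supp}\xi_v\subseteq\{-n\le\val(x)\le n\}$, so $\operatorname{width}(\operatorname{supp}W_v)\le 2n$. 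For general $r$ one bootstraps: $W_v\rest_{\GL_{r-1}}$ lies in the Gelfand--Graev representation of $\GL_{r-1}$ and is fixed by $\GL_{r-1}(\OO)\cap K(n)$; pairing it against Whittaker functions of twists of principal series of $\GL_{r-1}$ through the $\GL_r\times\GL_{r-1}$ Rankin--Selberg integral — using that $L(s,\pi\times\sigma)\equiv1$ for $\pi$ supercuspidal of $\GL_r$ and $\sigma$ of $\GL_{r-1}$, the monomial shape of $\varepsilon(s,\pi\times\sigma,\psi)$, and the inductive support bound on $\GL_{r-1}$ — shows the support of $W_v$ on $N\bs G$ meets only the cells $N\diag(\varpi^{m_1},\dots,\varpi^{m_r})\K$ with width $\le c'(r)n$, $c'(r)$ explicit and linear in $r$.

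\emph{Conclusion.} Given $a\in A^+$ with $(\pi(a)v,v^\vee)\neq0$, \eqref{eq:mc} produces $p\in N\bs P$ with $\widetilde W_{v^\vee}(p)\neq0$ and $W_v(pa)\neq0$. Represent $p$ by $n_1d_1k_1$ via an Iwasawa decomposition in the Levi $\GL_{r-1}$, with $d_1=\diag(\varpi^{l_1},\dots,\varpi^{l_{r-1}},1)$ and $k_1\in\K\cap P$; the first relation, by the key estimate for $\widetilde W_{v^\vee}$, gives $\abs{l_i}\le c'(r)n$. Writing $a=\diag(D_0,\varpi^{m_r})$ with $D_0=\diag(\varpi^{m_1},\dots,\varpi^{m_{r-1}})$ and taking an Iwasawa decomposition $k_1D_0=n''D'k''$ in $\GL_{r-1}$ — where $D'$ has the same extreme valuations as $D_0$ since $k_1\in\K$ — one finds $pa\in N\cdot\diag(d_1D',\varpi^{m_r})\cdot\K$, so $W_v(pa)\neq0$ forces, by the key estimate for $W_v$, $\operatorname{width}(\diag(d_1D',\varpi^{m_r}))\le c'(r)n$. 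Since $\operatorname{width}(D')=\operatorname{width}(D_0)$ and $\abs{l_i}\le c'(r)n$, a short computation bounds $\operatorname{width}(a)$ by an explicit multiple $c(r)n$ of $n$, completing the proof; the delicate point, as noted above, is propagating the key estimate from $\GL_2$ to $\GL_r$ through the Rankin--Selberg machinery while keeping track of the conductor of $\pi$ and of how the $K(n)$-level constrains the Whittaker support in every coordinate direction.
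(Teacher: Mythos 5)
Your overall strategy is sound in outline and overlaps with the paper in its first half: you correctly reduce the problem to a support bound for Whittaker functions of $K(n)$-fixed vectors, you note the ``one-sided'' cone constraint coming from $K(n)$-invariance and the $(N,\psi_N)$-equivariance (this is exactly the paper's citation of \cite[Prop.~6.1]{MR581582}), and you correctly diagnose that the hard point is to cut the cone down to a set of bounded width, which is where supercuspidality must enter. However, there are two genuine gaps, and they are precisely the places where the paper does something you have not replicated.

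\textbf{Gap 1: the key estimate is not actually proven.} Your $r=2$ sketch is plausible, but your proposed bootstrap to general $r$ via $\GL_r\times\GL_{r-1}$ Rankin--Selberg against \emph{principal series} of $\GL_{r-1}$ cannot close inductively: the support bound you are trying to prove fails for principal series (as you yourself remark), so there is no inductive hypothesis available on the $\GL_{r-1}$ side. The paper avoids this entirely by using the $\GL_r\times\GL_r$ functional equation for the single pair $\pi\times\pi^\vee$: since $\widetilde W$ is again a supercuspidal Whittaker function, both sides of the functional equation are Laurent \emph{polynomials}, and a degree comparison (the Bushnell--Henniart argument) gives the exact identity $M_W+m_{\widetilde W}=r-t-f$ where $f=f(\pi\times\pi^\vee)$. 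Combined with the elementary bound $m_{\widetilde W}\ge-\binom r2 n$, this gives $M_W\le\binom r2 n+r-t-f$, which is a useful upper bound only once one has a \emph{lower} bound on $f$. That lower bound is the paper's second ingredient: Howe's integrality of the formal degree $d_\pi/d_{\St}$, together with the formula expressing $d_\pi$ in terms of $q^{f/2}$. Your proposal has no analogue of this conductor lower bound, and it is not optional: without it the degree comparison does not close.

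\textbf{Gap 2: the final deduction contains a false step.} In your concluding paragraph you write the Iwasawa decomposition $k_1D_0=n''D'k''$ in $\GL_{r-1}$ and assert that ``$D'$ has the same extreme valuations as $D_0$ since $k_1\in\K$.'' This is false. For instance, with $k_1=\left(\begin{smallmatrix}1&0\\\varpi^M&1\end{smallmatrix}\right)\in\GL_2(\OO)$ and $D_0=\diag(\varpi^{-M},\varpi^M)$ one finds $D'\sim\diag(1,1)$: the Iwasawa diagonal can collapse to width zero even when $D_0$ has width $2M$. Consequently, knowing that the Iwasawa diagonal $d_1D'$ of $pa$ has width $O(n)$ does not bound the width of $a$, and the argument does not establish the theorem. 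The paper does \emph{not} attempt such an elementary manipulation of Iwasawa decompositions. Instead, it writes the matrix coefficient as a double $N\backslash G\times N\backslash G$ integral against the kernel $K_f(x,y)=\int_Nf(x^{-1}ny)\psi_N(n)\,dn$ and proves a genuine stability result (Proposition~3.1): for $f$ bi-$K_{r-1}(n)$-invariant, $\int_Nf(u)\psi_N(u)\,du$ only sees $u$ in the compact subgroup $N(t_0^n)$. This is proved by a nontrivial induction on $r$ via averaging over translates by the one-parameter matrices $x(\underline\alpha)$. That lemma is exactly what replaces your incorrect Iwasawa claim.

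In short: you found the right framework (Kirillov/Whittaker model plus Rankin--Selberg functional equations), but the two load-bearing ingredients of the actual proof --- the $\pi\times\pi^\vee$ degree identity combined with the formal-degree lower bound on $f$, and the Whittaker-integral stability result --- are missing, and the substitutes you propose for each (the $\GL_r\times\GL_{r-1}$ bootstrap and the ``same extreme valuations'' Iwasawa argument) do not work.
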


As explained in \cite{MR3001800}, the theorem is a direct consequence of the classification of irreducible
supercuspidal representations
by Bushnell--Kutzko \cite{MR1204652}, or more precisely, of the fact that every such representation is induced from
a representation of an open subgroup of $G$ which is compact modulo $Z$. This is in fact known for many other cases
of reductive groups over local non-archimedean fields and in these cases it implies the analogue
of Theorem \ref{thm: main}. We refer the reader to \cite{MR3001800} for more details.

In contrast, the proof given here is independent of the classification.
It is based on two ingredients.
The first, which is special to the general linear group,
is basic properties of local Rankin--Selberg integrals for $G\times G$ which were defined and studied by
Jacquet--Piatetski-Shapiro--Shalika. In particular, we use an argument of Bushnell--Henniart, originally used to give an upper
bound on the conductor of Rankin--Selberg local factors \cite{MR1462836}.
The second ingredient is Howe's result on the integrality of the formal degree with respect to a suitable Haar measure
\cite{MR0342645}, a result which was subsequently extended to any reductive group \cite{MR1702257, MR1159104, MR1471867}.
The two ingredients are linked by the fact, which also follows from properties of Rankin--Selberg integrals,
that the formal degree is essentially the conductor of $\pi\times\pi^\vee$,
a feature that admits a conjectural generalization for any reductive group \cite{MR2350057}.
(For another relation between formal degrees and support of matrix coefficients see \cite{MR1198303}.)


As explained in \cite{1504.04795, 1705.08191}, Theorem \ref{thm: main} is of interest for the problem of limit multiplicity.

I would like to thank Stephen DeBacker, Tobias Finis, Atsushi Ichino and Julee Kim for useful discussions
and suggestions. I am especially grateful to Guy Henniart for his input leading to Remark \ref{rem: Henn}.

\section{A variant for Whittaker functions}
It is advantageous to formulate a variant of Theorem \ref{thm: main} for the Whittaker model.
Throughout, fix a character $\psi$ of $F$ which is trivial on $\OO_F$ but non-trivial on $\varpi^{-1}\OO_F$.
Let $N$ be the subgroup of upper unitriangular matrices in $G$.
If $\pi$ is a generic irreducible representation of $G$, we write $\whit^\psi(\pi)$ for its Whittaker
model with respect to the character $\psi_N$ of $N$ given by $u\mapsto\psi(u_{1,2}+\dots+u_{r-1,r})$.
Recall that every irreducible supercuspidal representation of $G$ is generic \cite{MR0404534}.

Let $A$ be the diagonal torus of $G$. For all $n\ge1$ let $A(n)$ be the open subset
\[
A(n)=\{\diag(t_1,\dots,t_r)\in A:q^{-n}\le\abs{t_i/t_{i+1}}\le q^n,\ \ i=1,\dots,r-1\}.
\]
Clearly, $ZA(n)=A(n)$ and $A(n)$ is compact modulo $Z$.

\begin{theorem} \label{thm: main'}
There exists a constant $c=c(r)$ with the following property.
Let $\pi$ be an irreducible supercuspidal representation of $G$ with Whittaker model $\whit^\psi(\pi)$ and $n\ge1$.
Then the support of any $W\in\whit^\psi(\pi)^{K(n)}$ is contained in $NA(cn)\K$.
\end{theorem}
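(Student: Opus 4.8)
The plan is to reduce Theorem~\ref{thm: main'} to an estimate on the torus and then to establish that estimate in two halves, the second of which carries all the content. By the Iwasawa decomposition $G=NA\K$ and the relation $W(uak)=\psi_N(u)W(ak)$ for $u\in N$, $a\in A$, $k\in\K$, together with the fact that $K(n)$ is normal in $\K$ (so that $\pi(k)W$ again belongs to $\whit^\psi(\pi)^{K(n)}$ for every $k\in\K$), it suffices to find $c=c(r)$ such that for every irreducible supercuspidal $\pi$, every $n$, every $W\in\whit^\psi(\pi)^{K(n)}$ and every $a=\diag(t_1,\dots,t_r)$ with $W(a)\ne 0$ one has $q^{-cn}\le\abs{t_i/t_{i+1}}\le q^{cn}$ for $1\le i\le r-1$. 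The lower bound $\val(t_i/t_{i+1})\ge -n$ is elementary: if $W(a)\ne 0$, then right invariance of $W$ under the unipotent radical $N\cap K(n)$ of $K(n)$, combined with left $(N,\psi_N)$-equivariance, forces $\psi\bigl(\varpi^n(t_i/t_{i+1})x\bigr)=1$ for all $x\in\OO$ and all $i$, which is exactly $\val(t_i/t_{i+1})\ge -n$. Everything therefore comes down to the opposite bound, i.e.\ to the uniform quantitative compactness modulo $Z$ of the supports of the Whittaker functions of $\pi$; for this the group structure alone is not enough, and both ingredients of the paper enter.

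Write $M(\pi,n)$ for the largest value of $\val(t_i/t_{i+1})$ (over $i$ and over $a$ in the support) occurring among $W\in\whit^\psi(\pi)^{K(n)}$; this is finite by admissibility together with the compactness of individual supports. I would bound $M(\pi,n)$ by running the Bushnell--Henniart argument \cite{MR1462836} on the $G\times G$ Rankin--Selberg integrals $\Psi(s;W,W',\Phi)=\int_{N\bs G}W(g)W'(g)\Phi(e_rg)\abs{\det g}^s\,dg$ with $W\in\whit^\psi(\pi)^{K(n)}$, $W'\in\whit^{\psi^{-1}}(\pi^\vee)^{K(n)}$ and $\Phi$ of level $n$. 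The ratio $\Psi(s;W,W',\Phi)/L(s,\pi\times\pi^\vee)$ is a Laurent polynomial in $q^{-s}$ whose exponents range over an interval controlled linearly, with constants depending only on $r$, by $n$ and by the dominant support radii of $W$ and $W'$ (the center of $G$ is what makes these radii appear, since translating the torus variable by $\varpi^mZ$ multiplies $\abs{\det g}^s$ by $q^{-rms}$ without changing $WW'$). Applying this on both sides of the local functional equation $\Psi(1-s;\widetilde W,\widetilde{W'},\hat\Phi)=\gamma(s,\pi\times\pi^\vee,\psi)\,\Psi(s;W,W',\Phi)$, and using that $\gamma(s,\pi\times\pi^\vee,\psi)$ is a unit times $q^{-a(\pi\times\pi^\vee)s}$ times a ratio of $L$-factors because $\pi$ is supercuspidal, I expect to obtain an inequality of the shape
\[
M(\pi,n)\ \le\ a(\pi\times\pi^\vee)\ +\ c'(r)\,n .
\]
Here the support radius of $\widetilde W$, which is a priori uncontrolled, has to be absorbed: this is why the estimate is run symmetrically in $\pi$ and $\pi^\vee$ (with $\widetilde W\in\whit^{\psi^{-1}}(\pi^\vee)^{K(n)}$), and why the elementary lower bound of the first paragraph is invoked once more, now for $\pi^\vee$.

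To turn this into an unconditional bound one must control $a(\pi\times\pi^\vee)$, and this is where the second ingredient is used. On the one hand, the Rankin--Selberg theory also shows that the residue of $\Psi(s;W,\widetilde W,\Phi)$ at the pole of $L(s,\pi\times\pi^\vee)$ is a universal constant times $\Phi(0)\,\langle v,v^\vee\rangle/d_\pi$, so the formal degree $d_\pi$ is, up to a bounded rational factor, a power of $q$ whose exponent is a fixed positive multiple of $a(\pi\times\pi^\vee)$; Howe's integrality of the formal degree \cite{MR0342645}, with a suitably normalized Haar measure, is exactly what makes this comparison effective. On the other hand, $\whit^\psi(\pi)^{K(n)}\ne0$ produces a nonzero $K(n)$-fixed vector $v$ and, with $v^\vee$ its conjugate, a $K(n)$-bi-invariant matrix coefficient $f$ with $f(1)\ne0$; Schur orthogonality $\int_{Z\bs G}\abs{f(g)}^2\,dg=\|v\|^4/d_\pi$ together with $\abs{f}=\abs{f(1)}$ on $ZK(n)$ gives $d_\pi\le \vol(K(n))^{-1}\le q^{c''(r)n}$. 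Combining the two bounds on $d_\pi$ controls $a(\pi\times\pi^\vee)$, hence $M(\pi,n)$, by $c(r)n$, which is the assertion.

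The step I expect to be the real obstacle is the second paragraph: arranging the test data $W'$ and $\Phi$, and the degree bookkeeping in the functional equation, so that the comparison genuinely detects the \emph{support} of $W$ rather than only the conductor $a(\pi\times\pi^\vee)$ --- in particular so that the uncontrolled behavior of $\widetilde W$ is neutralized by the symmetrization --- and so that every implied constant remains uniform in $\pi$. The identification of $d_\pi$ with (a fixed multiple of) $a(\pi\times\pi^\vee)$ via the residue of the Rankin--Selberg integral, and the precise form in which Howe's theorem is fed in, are the other places where care will be needed.
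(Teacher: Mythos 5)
Your two ingredients are the right ones (the Bushnell--Henniart degree count on the Rankin--Selberg functional equation, and the formal-degree comparison), and the reduction to a torus estimate together with the elementary lower bound $\val(t_i/t_{i+1})\ge -n$ matches the paper. However, there is a sign error in the crucial inequality, and it propagates so that the second half of your argument uses the formal degree in the wrong direction.

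When you compare degrees of Laurent polynomials across the functional equation, the $\epsilon$-factor contributes a monomial $q^{-a(\pi\times\pi^\vee)(s-\frac12)}$, which in $x=q^{-s}$ is (a constant times) $x^{+a}$; after equating the $x$-degree of the two sides one finds that the top degree of $\Psi(s;W,\widetilde W,\Phi)/L(s,\pi\times\pi^\vee)$, i.e.\ the quantity controlling $M(\pi,n)$, equals $c'(r)\,n$ \emph{minus} $a(\pi\times\pi^\vee)$ plus bounded terms. (Concretely, the paper's Proposition~\ref{prop: mwMW} reads $M_W + m_{\widetilde W} = r - t - f$, so $M_W\le {r\choose 2}n + r - t - f$.) Your inequality $M(\pi,n)\le a(\pi\times\pi^\vee) + c'(r)n$ has the conductor on the wrong side. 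Consequently what is needed is a \emph{lower} bound on $a(\pi\times\pi^\vee)$ (equivalently a lower bound on $d_\pi$, since $d_\pi \asymp q^{\frac12 a(\pi\times\pi^\vee)}$), not an upper bound.

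Your Schur-orthogonality estimate $d_\pi\le\vol(K(n))^{-1}\le q^{c''(r)n}$ is correct but therefore useless here: it bounds $a(\pi\times\pi^\vee)$ from above, i.e.\ it bounds $-a(\pi\times\pi^\vee)$ from below, which leaves $M(\pi,n)\le c'(r)n - a(\pi\times\pi^\vee)$ uncontrolled. Moreover, although you invoke Howe's integrality of formal degrees, your argument never actually uses it---the Schur orthogonality step is independent of integrality. The way integrality enters the paper is precisely to furnish the missing lower bound: $d_\pi/d_{\St}$ is a positive integer, hence $\ge 1$, hence $d_\pi\ge d_{\St}$; feeding this into the explicit formula $d_\pi=\frac{t}{r}q^{\frac12 f}\frac{1-q^{-1}}{1-q^{-t}}$ yields $f=a(\pi\times\pi^\vee)\ge (r+1)r-2(t+v_q(t))$, which is bounded below in terms of $r$ alone. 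So the fix is to flip the sign in your degree count and replace the upper bound on $d_\pi$ by the lower bound $d_\pi\ge d_{\St}$ coming from Howe's integrality.
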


In order to prove Theorem \ref{thm: main'} we set some more notation.
For any function $W$ on $G$ let
\begin{align*}
M_W&=\sup\{\val(\det g):W(g)\ne0\text{ and }\norm{g_r}=1\},\\
m_W&=\inf\{\val(\det g):W(g)\ne0\text{ and }\norm{g_r}=1\},
\end{align*}
(including possibly $\pm\infty$) where $g_r$ is the last row of $g$ and $\norm{(x_1,\dots,x_r)}=\max\abs{x_i}$.

Recall that by a standard argument (cf. \cite[Proposition 6.1]{MR581582})
for any right $K(n)$-invariant and left $(N,\psi_N)$-equivariant function $W$ on $G$,
if $W(tk)\ne0$ for some $t=\diag(t_1,\dots,t_n)\in A$ and $k\in\K$ then $\abs{t_i/t_{i+1}}\le q^n$, $i=1,\dots,r-1$.
Hence, $m_W\ge -{r\choose2}n$.

For any $W\in\whit^\psi(\pi)$ let $\widetilde W\in\whit^{\psi^{-1}}(\pi^\vee)$ be given by
$\widetilde W(g)=W(w_r\,^tg^{-1})$
where $w_r=\left(\begin{smallmatrix}&&1\\&\iddots&\\1\end{smallmatrix}\right)$.

Let $t=t(\pi)$ be the order of the group of unramified characters $\chi$ of $F^*$ such that $\pi\otimes\chi\simeq\pi$.
Clearly, $t$ divides $r$.

Let $f=f(\pi\times\pi^\vee)\in\Z$ be the conductor of the pair $\pi\times\pi^\vee$ (see below).

Theorem \ref{thm: main'} is an immediate consequence of the following two results.

\begin{proposition} \label{prop: mwMW}
For any $0\ne W\in\whit^\psi(\pi)$ we have
\[
M_W+m_{\widetilde W}=r-t-f.
\]
In particular, if $W\in\whit^\psi(\pi)^{K(n)}$ then
\[
M_W\le{r\choose2}n+r-t-f.
\]
\end{proposition}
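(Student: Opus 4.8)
The plan is to exploit the Rankin--Selberg integral for $G \times G$ attached to $\pi$ and $\pi^\vee$, following the strategy of Bushnell--Henniart. For $W \in \whit^\psi(\pi)$ and $W' \in \whit^{\psi^{-1}}(\pi^\vee)$, one forms the local zeta integral
\[
\Psi(s,W,W',\Phi) = \int_{N \backslash G} W(g)\,W'(g)\,\Phi(e_r g)\,\abs{\det g}^s\,dg,
\]
where $\Phi$ is a Schwartz--Bruhat function on $F^r$ and $e_r = (0,\dots,0,1)$. Since $\pi$ is supercuspidal, the theory of Jacquet--Piatetski-Shapiro--Shalika tells us that this integral converges for $\Re(s)$ large, extends to an element of $\Q(q^{-s})$, and — crucially for the supercuspidal case — that the family of these integrals, as $W$, $W'$, $\Phi$ vary, is a \emph{fractional ideal of the form} $q^{-fs}\Z[q^s,q^{-s}]$ up to the normalization built into the definition of the conductor $f = f(\pi \times \pi^\vee)$, and in fact (again because $\pi$ is supercuspidal) the $\gamma$-factor / $L$-factor is trivial so the zeta integral is itself a Laurent polynomial in $q^{-s}$ whose ``span'' of exponents is governed by $f$.

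The key computational step is to unfold the integral over $N \backslash G$ using the Iwasawa-type decomposition $g = a k$ with the integration over the last row recording $\Phi(e_r g)$. Choosing $\Phi = \one_{\OO^r}$, the support condition $e_r g \in \OO^r$ together with the constraint $\norm{g_r} = 1$ picks out precisely the locus entering the definitions of $M_W$ and $m_{W'}$. After collapsing the $\K$-integration (using the $K(n)$-invariance and an averaging argument), the zeta integral becomes, up to a nonzero constant, a finite sum $\sum_{j} c_j\, q^{-js}$ where the exponents $j$ that actually occur range exactly between (a shift of) $m_{\widetilde W}$ on one end and $M_W$ on the other; the relation $\widetilde W(g) = W(w_r\,{}^t g^{-1})$ is what converts the $W'$-side bound $m_{W'}$ into the stated quantity $m_{\widetilde W}$ via the functional equation $g \mapsto w_r\,{}^t g^{-1}$, which sends $\abs{\det g}$ to $\abs{\det g}^{-1}$ and swaps $\Phi \leftrightarrow \widehat\Phi$. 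Matching the top and bottom exponents of this Laurent polynomial against the standard computation of the conductor (and accounting for the contribution of the unramified twist group of order $t$, which is responsible for the ``$-t$'' term — it reflects that $\pi \times \pi^\vee$ contains $\one \oplus (\text{unramified of order } t)$ worth of ``trivial'' pieces contributing a factor $(1-q^{-s})^{-1}$-type pole of multiplicity related to $t$, equivalently $\prod_{\zeta^t = 1}(1 - \zeta q^{-s})$ in the denominator) yields
\[
M_W + m_{\widetilde W} = r - t - f.
\]
The ``in particular'' clause then follows immediately: from the standard argument recalled before the proposition (cf. \cite[Proposition 6.1]{MR581582}), a right $K(n)$-invariant, left $(N,\psi_N)$-equivariant function $W$ satisfies $m_{\widetilde W} \ge -\binom{r}{2}n$ — applied to $\widetilde W$, which is right $K(n)$-invariant since $W$ is — and rearranging $M_W = r - t - f - m_{\widetilde W} \le \binom{r}{2}n + r - t - f$.

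The main obstacle I expect is making the ``fractional ideal'' / exponent-matching step fully precise: one must know that for supercuspidal $\pi$ the greatest common divisor of the zeta integrals is $q^{-fs}$ times a unit \emph{and} that both extreme exponents $-M_W \cdot s$ and (the shifted) $-m_{\widetilde W} \cdot s$ are genuinely attained by the specific test data, not merely bounded. This requires a non-vanishing statement — that the leading and trailing coefficients of the relevant Laurent polynomial are nonzero — which one gets from the local functional equation (the $\epsilon$-factor is a monomial of degree $-f$ for supercuspidal $\pi \times \pi^\vee$, forcing the Laurent polynomial to have exactly the predicted degree and ``co-degree'') combined with the fact that $\Psi(s,W,\widetilde W,\one_{\OO^r}) \not\equiv 0$ for suitable $W$ by the non-degeneracy of the Whittaker pairing. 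The bookkeeping of the constant — isolating exactly how $r$, $t$, and $f$ enter — is where care is needed, and the $w_r\,{}^t(\cdot)^{-1}$ change of variables in the functional equation is the technical heart that relates the two sides.
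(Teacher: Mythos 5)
Your high-level route --- local Rankin--Selberg integrals for $G\times G$, the functional equation with $\Phi\leftrightarrow\tilde\Phi$, and comparison of extremal exponents of the resulting Laurent polynomial --- is indeed the one the paper follows, and you correctly identify the central obstruction: one must show that the extremal exponents $M_W$ and $m_{\widetilde W}$ are actually \emph{attained}, i.e., that the leading and trailing coefficients of the Laurent polynomial do not vanish for the given $W$. However, the fix you propose does not close this gap. Non-degeneracy of the Whittaker pairing and the fact that the $\epsilon$-factor is a monomial of degree $f$ only guarantee that the zeta integral is not identically zero and that the \emph{family} of zeta integrals (over all $W$, $W'$, $\Phi$) generates the correct fractional ideal; they do not rule out cancellation at the top or bottom exponent in the specific integral $\Psi(s,W,\widetilde W,\Phi)$ attached to your fixed $W$.

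The paper's resolution is a positivity argument that you are missing: one may assume $\pi$ is unitarizable (twisting by an unramified character changes none of $M_W$, $m_{\widetilde W}$, $t$, $f$), and one then considers the zeta integral of $\abs{W(g)}^2$ rather than of $W(g)W'(g)$ for an independent $W'$. The coefficients of the Laurent polynomial $P^\psi_W(x)$ in $x=q^{-s}$ are then non-negative, each being $\int_{\{\val(\det g)=m,\ \norm{g_r}=1\}}\abs{W(g)}^2\,dg$; the coefficient of $x^m$ vanishes precisely when $W$ vanishes on that locus, so the degree of $P^\psi_W$ is exactly $M_W$ and its trailing order exactly $m_W$, with no cancellation possible. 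The degree in $x$ of $P^{\psi^{-1}}_{\widetilde W}(y)$ (via $y=q^{-1}x^{-1}$) is then exactly $-m_{\widetilde W}$, and comparing degrees across the functional equation
\[
q^{\frac12 f}x^{f}\,\frac{1-x^{t}}{1-x^r}\,P^\psi_W(x)=\frac{1-y^{t}}{1-y^r}\,P^{\psi^{-1}}_{\widetilde W}(y)
\]
yields $M_W+m_{\widetilde W}=r-t-f$. A secondary but genuine imprecision in your sketch: the condition $\norm{g_r}=1$ in the definitions of $M_W$, $m_W$ corresponds to integrating against $\Phi_1$, the characteristic function of primitive vectors, not against $\Phi_0=\one_{\OO^r}$. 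The paper first uses $\Phi_0$ (because $\tilde\Phi_0=\Phi_0$ makes the functional equation clean) and then passes to $\Phi_1=\Phi_0-\Phi_0(\varpi^{-1}\cdot)$ via $A^\psi(s,W,\Phi_1)=(1-q^{-rs})A^\psi(s,W,\Phi_0)$; this, together with the $L$-factor $1-q^{-ts}$ from the unramified twists, is exactly what produces the rational factors $\frac{1-x^t}{1-x^r}$ and accounts for the $r$ and $t$ terms in the final identity.
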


\begin{proposition} \label{prop: lwrbndf}
We have
\[
f\ge (r+1)r-2(t+v_q(t))
\]
where $v_q(t)$ is the maximal power of $q$ dividing $t$.
Moreover, $f$ is even if $q$ is not a square.
\end{proposition}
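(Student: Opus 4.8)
The plan is to deduce the inequality from Howe's integrality of the formal degree, once $f$ has been expressed in terms of that formal degree; this is the second of the two ingredients announced in the introduction, and the link between them is again furnished by the Rankin--Selberg theory for $\pi\times\pi^\vee$.

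First I would record that, $\pi$ being supercuspidal, the poles of the Rankin--Selberg $L$-factor are controlled by the unramified self-twists, so that $L(s,\pi\times\pi^\vee)=(1-q^{-ts})^{-1}$, with all poles simple, the pole at $s=0$ having residue $-1/t$ (in the variable $q^{-s}$). Combining this with the unramified local functional equation, in which $\epsilon(s,\pi\times\pi^\vee,\psi)$ is a constant times $q^{-fs}$, together with the standard expression of the formal degree as the reciprocal of $\int_{Z\bs G}\abs{(\pi(g)v,v^\vee)}^2\,dg$ and the evaluation of the latter through local Rankin--Selberg integrals for $\pi\times\pi^\vee$ near $s=0$, one arrives at an explicit identity of the shape
\[
\deg_\mu(\pi)=q^{f/2}\cdot\rho_r(q,t),
\]
where $\mu$ is a fixed Haar measure on $G/Z$ normalized through the volume of $\K Z/Z$, and $\rho_r(q,t)$ is an \emph{explicit} nonzero rational number assembled from the factors $1-q^{-i}$ ($1\le i\le r$), $1-q^{-t}$, and $t$. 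Since $t\mid r$, the factor $1-q^{-t}$ divides $1-q^{-r}$ as a polynomial in $q^{-1}$ and, after clearing denominators, $\rho_r(q,t)$ is a power of $q$ times a rational number prime to $q$; one then computes
\[
v_q\bigl(\rho_r(q,t)\bigr)=t+v_q(t)-\binom{r+1}{2},
\]
the summand $v_q(t)$ being the contribution of the integer $t$ entering through the residue of $L(s,\pi\times\pi^\vee)$.

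Finally I would invoke Howe's theorem \cite{MR0342645}: with respect to $\mu$ (after a harmless rescaling) $\deg_\mu(\pi)$ is a positive integer, in particular $v_q(\deg_\mu(\pi))\ge 0$. Comparing this with the displayed identity and taking $q$-adic valuations gives
\[
0\le v_q\bigl(\deg_\mu(\pi)\bigr)=\tfrac{f}{2}+v_q\bigl(\rho_r(q,t)\bigr)=\tfrac{f}{2}+t+v_q(t)-\binom{r+1}{2},
\]
that is, $f\ge 2\binom{r+1}{2}-2t-2v_q(t)=(r+1)r-2\bigl(t+v_q(t)\bigr)$. For the last assertion, $q^{f/2}=\deg_\mu(\pi)/\rho_r(q,t)$ is a ratio of rational numbers, hence rational; since $q^{f/2}=(\sqrt{q})^{f}$, this forces $f$ to be even unless $\sqrt{q}\in\Q$, i.e. unless $q$ is a perfect square. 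The main obstacle is the first step --- extracting the precise formal-degree formula with all of its constants and matching its measure normalization to the one in Howe's statement --- and in particular carrying out the $q$-adic bookkeeping that produces the exact $v_q(t)$ correction.
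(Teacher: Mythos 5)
Your proof is correct and follows essentially the same path as the paper's: the formal-degree formula coming from Rankin--Selberg theory (the paper simply cites \cite[Theorem 2.1]{MR3649356} for $d_\pi=\frac{t}{r}q^{f/2}\frac{1-q^{-1}}{1-q^{-t}}$), Howe's integrality, and exactly the $q$-adic bookkeeping $v_q(\rho_r)=t+v_q(t)-{r+1\choose 2}$. The only difference is that the paper makes your ``harmless rescaling'' explicit by dividing by the Steinberg formal degree, so that the integer whose $q$-adic valuation is nonnegative is the measure-independent ratio $d_\pi/d_{\St}$.
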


\begin{proof}[Proof of Proposition \ref{prop: mwMW}]
The argument is inspired by \cite{MR1462836}.

We may assume without loss of generality that $\pi$ is unitarizable.
For any $\Phi\in\mathcal{S}(F^r)$ consider the local Rankin--Selberg integral
\[
A^{\psi}(s,W,\Phi)=\int_{N\bs G}\abs{W(g)}^2\Phi(g_r)\abs{\det g}^s\ dg,
\]
a Laurent series in $x=q^{-s}$ which represents a rational function in $x$ \cite{MR701565}.
Note that if $\Phi(0)=0$ then $A^{\psi}(s,W,\Phi)$ is a Laurent polynomial in $x$ since $W$ is compactly
supported modulo $ZN$.
Also note that for any $\lambda\in F^*$ we have
\[
A^{\psi}(s,W,\Phi(\lambda\cdot))=\abs{\lambda}^{-rs}A^\psi(s,W,\Phi).
\]

Recall the functional equation (\cite[Theorem 2.7 and Proposition 8.1]{MR701565} together with \cite{MR1703873})
\[
q^{f(\frac12-s)}(1-q^{-ts})A^{\psi}(s,W,\Phi)=(1-q^{t(s-1)})A^{\psi^{-1}}(1-s,\widetilde W,\tilde\Phi)
\]
where
\[
\tilde\Phi(x)=\int_{F^n}\Phi(y)\psi(x\,^ty)\ dy
\]
is the Fourier transform of $\Phi$ and $f\in\Z$ is the conductor.


Now let $\Phi_0$ be the characteristic function of the standard lattice $\{\xi\in F^r:\norm{\xi}\le1\}$ and set
$A^{\psi}_0(s,W)=A^\psi(s,W,\Phi_0)$. Then $\tilde\Phi_0=\Phi_0$ and we obtain
\[
q^{f(\frac12-s)}(1-q^{-ts)})A^{\psi}_0(s,W)=
(1-q^{t(s-1)})A^{\psi^{-1}}_0(1-s,\widetilde W).
\]
Let $\Phi_1=\Phi_0-\Phi_0(\varpi^{-1}\cdot)$ be the characteristic function of the $\K$-invariant set
$\{\xi\in F^r:\norm{\xi}=1\}$ of primitive vectors. Then
$A^{\psi}_1(s,W):=A^\psi(s,W,\Phi_1)=(1-q^{-rs})A^{\psi}_0(s,W)$ and thus,
\[
q^{f(\frac12-s)}\frac{1-q^{-ts}}{1-q^{-rs}}A^{\psi}_1(s,W)=
\frac{1-q^{t(s-1)}}{1-q^{r(s-1)}}A^{\psi^{-1}}_1(1-s,\widetilde W).
\]
Recall that $A^\psi_1(s,W)$ is a Laurent polynomial $P^\psi_w(x)$ in $x=q^{-s}$. We get an equality of Laurent
polynomials
\begin{equation} \label{eq: polyeq}
q^{\frac12f}x^{f}\frac{1-x^{t}}{1-x^r}P^\psi_W(x)=
\frac{1-y^{t}}{1-y^r}P^{\psi^{-1}}_{\widetilde W}(y)
\end{equation}
where $y=q^{-1}x^{-1}$. Note that the fact that $P^\psi_W(x)$ is divisible by $\frac{1-x^r}{1-x^{t}}$
amounts to saying that the integral
\[
\int_{g\in N\bs G:\norm{g_r}=1,\val(\det g)\equiv a\pmod{r/t}}\abs{W(g)}^2\ dg
\]
is independent of $a$, which in turn follows from (and in fact, equivalent to) the fact that $W$ is orthogonal to
$W\abs{\det}^{\frac{2\pi\textrm{i}j}{r\log q}}$ unless $j$ is divisible by $r/t$.
Also note that $P^\psi_W$ has non-negative coefficients and the degree of $P^\psi_W$ is $M_W$.
Likewise, the degree of $P^\psi_W(y)$ as a Laurent polynomial in $x$ (i.e., the order of pole of $P^\psi_W$ at $0$)
is $-m_W$.
Comparting degrees in \eqref{eq: polyeq} we obtain Proposition \ref{prop: mwMW}.
\end{proof}

\begin{proof}[Proof of Proposition \ref{prop: lwrbndf}]
By an argument based on properties of Rankin-Selberg integrals,
the formal degree, with respect to a suitable choice of Haar measure, is related to $f$ by the formula
\[
d_\pi=\frac{t}{r}q^{\frac12 f}\frac{1-q^{-1}}{1-q^{-t}}
\]
(\cite[Theorem 2.1]{MR3649356}).
Comparing it to the formal degree of the Steinberg representation $\St$ with respect to the same measure we get
\[
\frac{d_\pi}{d_{\St}}=t\cdot q^{t-{r+1\choose 2}+\frac12 f}\cdot\frac{q^r-1}{q^t-1}.
\]
On the other hand, $\frac{d_\pi}{d_{\St}}$ (which is independent of the choice of Haar measure)
is a (positive) integer (cf.~\cite{MR0342645}, \cite{MR607380}, \cite[Appendice 3]{MR743063}).
The lemma follows.
\end{proof}

\begin{remark} \label{rem: Henn}
As explained to me by Guy Henniart, the lower bound in Proposition \ref{prop: lwrbndf} is not sharp.
In fact, a precise formula for $f(\pi\times\pi^\vee)$, and more generally for
$f(\pi_1\times\pi_2)$ for an arbitrary pair of irreducible supercuspidal representations $\pi_i$
of $\GL_{n_i}(F)$, $i=1,2$ is given in \cite{MR1606410}.
The expression is in terms of the Bushnell--Kutzko description of supercuspidal representations.
Using this, one can sharpen Proposition \ref{prop: lwrbndf} as follows.

First note that $f(\pi\times\pi^\vee)$ is insensitive to twisting $\pi$ by a character.
Suppose that $\pi$ is minimal under twists, i.e., $f(\pi\times\chi)\ge f(\pi)$ for any character
$\chi$ of $F^*$ where $f(\pi)$ is the conductor of $\pi$.
Then by \cite[Lemma 3.5]{MR3711826} and its proof we have $f(\pi\times\pi^\vee)\ge r^2-t+\frac12r(f(\pi)-r)$
with equality if $f(\pi)=r$, i.e., if $\pi$ has level $0$ (that is, if $\pi$ has a non-zero vector invariant under $K(1)$,
in which case $t=r$). Thus, if $f(\pi)>r+1$ then we get $f(\pi\times\pi^\vee)\ge r(r+1)-t$.
On the other hand, we always have $f(\pi)\ge r$ \cite[(5.1)]{MR882297} and
if $f(\pi)=r+1$, i.e., if $\pi$ is epipelagic then $t=1$ and $f(\pi\times\pi^\vee)=(r-1)(r+2)$.
More generally, if $f(\pi)$ is coprime to $r$, i.e., if $\pi$ is a Carayol representation, then $t=1$ and
$f(\pi\times\pi^\vee)=(r-1)(f(\pi)+1)$. For instance, this follows from \cite[(6.1.1),(6.1.2)]{MR1606410} and [ibid., Theorem 6.5(i)]
where in its notation we have $n=e=d=r$ and $\mathfrak{c}(\beta_1)=m(r-1)$ --
cf. second paragraph (``minimal case'') of [ibid., p.~727] with $k=m$, $e(\gamma)=d=r$.

To conclude, for any supercuspidal $\pi$ we have
\begin{equation} \label{eq: sharp}
f(\pi\times\pi^\vee)\ge r(r+1)-2t
\end{equation}
with equality if and only if $\pi$ is a twist of either a representation of level $0$ or an epipelagic representation.



Alternatively, one could infer \eqref{eq: sharp} and the conditions for equality
from the local Langlands correspondence for $G$ (cf.~\cite{MR2730575}).
Details will be appear in the upcoming thesis of Kilic.

The results of \cite{MR1606410} also give that $f(\pi\times\pi^\vee)$ is even.
(Details will be given elsewhere.) In the Galois side this follows from a result of Serre \cite{MR0321908}.

I am grateful to Guy Henniart for providing me this explanation and allowing me to include it here.

For our purpose, the precise lower bound on $f(\pi\times\pi^\vee)$ is immaterial -- it is sufficient to have
the inequality $f\ge0$ (or even, $f\ge c'n$ for some fixed $c'$ depending only $r$).
The point is that we do not use either the local Langlands correspondence or the classification of supercuspidal
representations (but of course, we do use the non-trivial analysis of \cite{MR0342645}
which depends on \cite{MR0492088}).

Nonetheless, it would be interesting to prove \eqref{eq: sharp} (and perhaps the evenness of $f(\pi\times\pi^\vee)$)
without reference to the classification or to the local Langlands correspondence.


\end{remark}

\section{Proof of main result}
In order to deduce Theorem \ref{thm: main} from Theorem \ref{thm: main'} we make the argument of
\cite[Proposition 2.11]{MR3267120} effective in the case of $G=\GL_r$.

For any $t=\diag(t_1,\dots,t_r)\in A$ consider the compact open subgroup
\[
N(t)=N\cap t\K t^{-1}=\{u\in N:\val(u_{i,j})\ge\val(t_i)-\val(t_j)\text{ for all }i<j\}
\]
of $N$. Set
\[
t_0=\diag(\varpi,\varpi^2,\dots,\varpi^{2^{r-1}})\in A.
\]

\begin{proposition} \label{prop: effstab}
Let $f$ be a compactly supported continuous funciton on $G$.
Assume that $f$ is bi-invariant under $K_{r-1}(n)$ for some $n\ge1$.
Then
\[
\int_Nf(u)\psi_N(u)\ du=\int_{N(t_0^n)}f(u)\psi_N(u)\ du.
\]
In particular, $\int_Nf(u)\psi_N(u)\ du=0$ if $f$ vanishes on $\ball((2^{r-1}-1)n)$.
\end{proposition}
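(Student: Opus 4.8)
The plan is to argue by induction on $r$, using the standard (upper-left corner) embedding $\GL_{r-1}\hookrightarrow\GL_r$, and to read off the ``in particular'' clause at the end. For $r=1$ there is nothing to prove. It is instructive to see how $r=2$ goes: with $\GL_1=\{\diag(a,1)\}$, the $K_1(n)$-bi-invariance of $f$ forces $f(u_x)=f(u_{ax})$ for all $a\in 1+\varpi\OO^{\,n}$ (i.e.\ $a\in 1+\varpi^n\OO$), where $u_x$ denotes the unipotent matrix with off-diagonal entry $x$; hence $f$ is constant on the coset $x(1+\varpi^n\OO)=x+\varpi^{n+\val x}\OO$, and since $\psi$ is trivial exactly on $\OO$, integration of $\psi$ over this coset vanishes unless $\val x\ge -n$, i.e.\ unless $u_x\in N(t_0^n)$. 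This is the assertion for $r=2$.

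For the inductive step I would factor $N=N_{r-1}\ltimes V_r$, where $V_r\cong F^{r-1}$ is the group of matrices agreeing with $1$ away from the last column. For $u=n'v$ with $n'\in N_{r-1}$ and $v\in V_r$ one has $\psi_N(u)=\psi_{N_{r-1}}(n')\psi(v_{r-1})$, with $v_{r-1}$ the last coordinate of $v$, so
\[
\int_N f(u)\psi_N(u)\,du=\int_{N_{r-1}}\psi_{N_{r-1}}(n')\,\hat J(n')\,dn',\qquad
\hat J(g):=\int_{V_r}f(gv)\,\psi(v_{r-1})\,dv .
\]
The first point is to verify that $\hat J$, regarded as a function on $\GL_{r-1}$, is continuous, compactly supported, and bi-invariant under $K_{r-2}(n)$: left invariance is immediate from that of $f$, and right invariance follows from the substitution $v\mapsto k^{-1}vk$ for $k\in K_{r-2}(n)$ --- such $k$ has trivial determinant contribution and acts on $V_r$ fixing the last coordinate, so the measure and $\psi(v_{r-1})$ are unchanged, while $f(gkv)=f(gvk)=f(gv)$. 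Applying the inductive hypothesis to $\hat J$ replaces the outer integral by one over $N_{r-1}(s_0^n)$ with $s_0=\diag(\varpi,\varpi^2,\dots,\varpi^{2^{r-2}})$; a direct comparison of the defining inequalities shows that $N_{r-1}(s_0^n)$ is exactly the set of upper-left $(r-1)\times(r-1)$ blocks of elements of $N(t_0^n)$. We are thus reduced to showing, for each fixed $n'\in N_{r-1}(s_0^n)$ --- which now ranges over a compact set --- that $\hat J(n')$ is unchanged if the $V_r$-integral is restricted to the slice $\{v\in V_r:\ n'v\in N(t_0^n)\}$.

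This last step is the heart of the argument. Writing the last column of $n'v$ as $n''\vec v$ (with $n''$ the $(r-1)\times(r-1)$ block of $n'$) and using that $n''$ is upper unitriangular, one checks that this slice is $\{\vec v:\ \val v_i\ge(2^{i-1}-2^{r-1})n,\ i=1,\dots,r-1\}$, independently of $n''$; here the extreme exponent $2^{r-1}$ is precisely $\val((t_0^n)_r)$, so this is where the last coordinate of $t_0$ is produced. On the other hand the $K_{r-1}(n)$-bi-invariance of $f$ shows that $v\mapsto f(n'v)$ is invariant under a compact open subgroup $H(n'')\le K_{r-1}(n)$ --- arising from the joint left/right stabiliser of $n''$ --- acting linearly on $V_r$, whose orbits are cosets of explicit $\OO$-lattices; since $n''$ is bounded one controls how fine these lattices are in each coordinate direction. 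The point is then that the single oscillatory factor $\psi(v_{r-1})$, whose level is pinned down by the conductor of $\psi$, makes the contribution of every $H(n'')$-orbit not contained in the slice vanish: on such an orbit the last coordinate varies over a translate of a fractional ideal strictly containing $\OO$ (either because the orbit lies outside the range $\val v_{r-1}\ge -2^{r-2}n$, or because it is coarse in that direction), over which $\psi$ integrates to zero. I expect the delicate bookkeeping here --- matching the orbit lattices of $H(n'')$ against the slice inequalities, and seeing the doubling $2^{r-2}\mapsto 2^{r-1}$ --- to be the main obstacle; it is exactly what dictates the scaling vector $t_0$. Granting it, the induction closes. Finally, every entry of an element of $N(t_0^n)$, and of its inverse (as $N(t_0^n)$ is a group), has absolute value at most $q^{(2^{r-1}-1)n}$, the extreme exponent being $2^{r-1}-2^0$; hence $N(t_0^n)\subseteq\ball((2^{r-1}-1)n)$, so $f$ vanishing on $\ball((2^{r-1}-1)n)$ forces $\int_{N(t_0^n)}f\psi_N=0$ and therefore $\int_N f\psi_N=0$.
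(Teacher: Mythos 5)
Your inductive setup --- factoring $N=U\ltimes V$ with $U=N_{r-1}$, forming the partial integral $\hat J$ on $\GL_{r-1}$, checking it is bi-$K_{r-2}(n)$-invariant, and applying the inductive hypothesis to pass the $U$-integral to one over $U(t_0^n)$ --- coincides with the paper's. The gap is exactly the step you yourself flag as ``the main obstacle'', namely localizing the remaining $V$-integral to $V(t_0^n)$; this is the whole technical content of the proposition (it is the paper's Lemma~\ref{lem: aux123}), and your sketch of the mechanism does not work as stated. You want to invoke invariance of $v\mapsto f(n'v)$ under a compact open \emph{subgroup} $H(n'')\le K_{r-1}(n)$ acting on $V$ by conjugation, and to show that $\psi(v_{r-1})$ integrates to zero on each orbit outside $V(t_0^n)$. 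But $V(t_0^n)$ is not stable under conjugation by $K_{r-1}(n)$: already for $r\ge4$, if $\val v_1=(1-2^{r-1})n$ then conjugating by $k=1+\varpi^n m$ with $m_{r-1,1}$ a unit adds to $v_{r-1}$ a term of valuation $(2-2^{r-1})n<(2^{r-2}-2^{r-1})n$. So orbits straddle the boundary of the slice and the inside/outside dichotomy you rely on fails; moreover, orbits of a compact group acting \emph{linearly} need not be lattice cosets at all.

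What the paper actually uses is a non-group \emph{family} of conjugators: the matrices $x(\underline{\alpha})$, equal to the identity except for row $r-1$, which is $(\alpha_1,\dots,\alpha_{r-1}+1,0)$, with $\underline{\alpha}$ ranging over the lattice $L(n)=\{\underline{\alpha}:\val\alpha_j\ge n(2^{r-1}-2^{j-1})\}$. The additive parametrization is what makes the average $\vol(L(n))^{-1}\int_{L(n)}\psi_N(v^{x(\underline{\alpha})})\,d\underline{\alpha}$ an honest Fourier projection, equal to $\psi_N(v)$ for $v\in V(t_0^n)$ and to $0$ otherwise; and the $L(n)$-conditions guarantee simultaneously $x(\underline{\alpha})\in K_{r-1}(n)$ and $x(\underline{\alpha})^u\in K_{r-1}(n)$ for \emph{every} $u\in U(t_0^n)$, so the bi-invariance of $f$ is exploited uniformly over the already-localized $U$-integral rather than through an $n''$-dependent stabiliser. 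Your ``joint left/right stabiliser'' instinct points in the right direction, but the argument has to run through this specific family and this Fourier average, not through orbit-by-orbit vanishing under a subgroup. (Your final ``in particular'' step, $N(t_0^n)\subseteq\ball((2^{r-1}-1)n)$, is correct.)
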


We first need some more notation.
Write $N=U\ltimes V$ where $U=N_{r-1}$ is the group of unitriangular matrices in $\GL_{r-1}$
embedded in $\GL_r$ in the upper left corner and $V$ is the unipotent radical
of the parabolic subgroup of type $(r-1,1)$, i.e., the $r-1$-dimensional abelian group
\[
V=\{u\in N:u_{i,j}=0\text{ for all }i<j<r\}.
\]
For any $\underline{\alpha}=(\alpha_1,\dots,\alpha_{r-1})\in F^{r-1}$ let $x(\underline{\alpha})$
be the $r\times r$-matrix that is the identity except for the $(r-1)$-th row which is $(\alpha_1,\dots,\alpha_{r-1}+1,0)$.
Of course $x$ is not a group homomorphism because of the diagonal entry.

For $t\in A$ write $U(t)=U\cap N(t)$ and $V(t)=V\cap N(t)$ so that $N(t)=U(t)\ltimes V(t)$.
The following is elementary.
\begin{lemma} \label{lem: aux123}
For any $n\ge1$ let $L(n)$ be the lattice of $F^{r-1}$ given by
\[
L(n)=\{(\alpha_1,\dots,\alpha_{r-1}):\val(\alpha_j)\ge n(2^{r-1}-2^{j-1}),\ j=1,\dots,r-1\}.
\]
Then
\begin{enumerate}
\item \label{part: canconj}
$x(\underline{\alpha}),x(\underline{\alpha})^u\in K_{r-1}(n)$ for any $\underline{\alpha}\in L(n)$ and $u\in U(t_0^n)$.
\item \label{part: charcan}
For any $v\in V$ we have
\[
\vol(L(n))^{-1}\int_{L(n)}\psi_N(v^{x(\underline{\alpha})})\ d\underline{\alpha}=
\begin{cases}\psi_N(v)&v\in V(t_0^n),\\0&\text{otherwise.}\end{cases}
\]
\end{enumerate}
\end{lemma}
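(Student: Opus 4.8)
The plan is to reduce both parts to explicit local computations, exploiting that $x(\underline{\alpha})$ lies in the Levi of the standard parabolic of type $(r-1,1)$ and therefore has the block form $\left(\begin{smallmatrix}I+\alpha&0\\0&1\end{smallmatrix}\right)$, where $\alpha$ is the $(r-1)\times(r-1)$ matrix whose only nonzero row is the last one, with entries $\alpha_1,\dots,\alpha_{r-1}$. Write $e_j=n(2^{r-1}-2^{j-1})$, so that $L(n)=\prod_{j=1}^{r-1}\varpi^{e_j}\OO$ and $0<n\le 2^{r-2}n=e_{r-1}<\dots<e_1$; also note that $V(t_0^n)$ is precisely the set of $v=I+\sum_{i=1}^{r-1}v_iE_{i,r}\in V$ with $\val(v_i)\ge(2^{i-1}-2^{r-1})n=-e_i$ for all $i$.

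For part~\ref{part: canconj}: writing $u\in U(t_0^n)$ in block form $\left(\begin{smallmatrix}U'&0\\0&1\end{smallmatrix}\right)$ one gets $x(\underline{\alpha})^u=\left(\begin{smallmatrix}I+{U'}^{-1}\alpha U'&0\\0&1\end{smallmatrix}\right)$ (with the convention $g^h=h^{-1}gh$; the opposite convention is symmetric), so it suffices to show that every entry of ${U'}^{-1}\alpha U'$ has valuation $\ge n$, since this forces all of them into $\varpi^n\OO$ and hence $I+{U'}^{-1}\alpha U'\in K_{r-1}(n)$; the bare statement $x(\underline{\alpha})\in K_{r-1}(n)$ is the case $U'=I$. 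As $\alpha$ has only its $(r-1)$-st row nonzero, $({U'}^{-1}\alpha U')_{i,j}=({U'}^{-1})_{i,r-1}\bigl(\alpha_j+\sum_{l<j}\alpha_lU'_{l,j}\bigr)$; since $N(t_0^n)$ is a group, the $(a,b)$-entry of $U'$ and of ${U'}^{-1}$ has valuation $\ge(2^{a-1}-2^{b-1})n$ for $a\le b$, while $\val(\alpha_j)\ge e_j$ on $L(n)$. Feeding this in, the inner sum has valuation $\ge e_j$, so the whole entry has valuation $\ge(2^{i-1}-2^{r-2})n+e_j=(2^{i-1}+2^{r-2}-2^{j-1})n\ge n$, using $2^{j-1}\le 2^{r-2}$ and $2^{i-1}\ge1$. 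This last inequality is exactly what the doubling exponents of $t_0$ and the shape of $L(n)$ are engineered to make true, and the valuation bookkeeping here is the only step requiring real care.

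For part~\ref{part: charcan}: a short computation with $v=I+\sum_iv_iE_{i,r}$ shows that $v^{x(\underline{\alpha})}\in V$ and that its last column is $(I+\alpha)^{-1}$ applied to the column vector $(v_1,\dots,v_{r-1})$; since $\psi_N$ on $V$ only sees the $(r-1,r)$-entry, this gives $\psi_N(v^{x(\underline{\alpha})})=\psi\bigl((1+\alpha_{r-1})^{-1}(v_{r-1}-\sum_{j=1}^{r-2}\alpha_jv_j)\bigr)$. I would then integrate over $L(n)=\prod_j\varpi^{e_j}\OO$ one coordinate at a time. With $\alpha_{r-1}$ fixed (so that $1+\alpha_{r-1}$ is a unit), each $\alpha_j$-integral for $j\le r-2$ is of the form $\int_{\varpi^{e_j}\OO}\psi(c\alpha_j)\,d\alpha_j$ with $\val(c)=\val(v_j)$, which by the basic computation for an additive character of conductor $\OO$ equals $\vol(\varpi^{e_j}\OO)$ if $\val(v_j)\ge-e_j$ and $0$ otherwise; for the remaining $\alpha_{r-1}$-integral, rewriting $(1+\alpha_{r-1})^{-1}v_{r-1}=v_{r-1}-\alpha_{r-1}(1+\alpha_{r-1})^{-1}v_{r-1}$ splits off the constant $\psi(v_{r-1})$, and the measure-preserving substitution $\gamma=\alpha_{r-1}(1+\alpha_{r-1})^{-1}$ on $\varpi^{e_{r-1}}\OO$ reduces it to the same elementary integral. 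Collecting the factors, the normalized integral equals $\psi(v_{r-1})\prod_{j=1}^{r-1}\mathbf{1}[\val(v_j)\ge-e_j]$; since the product of indicators is $\mathbf{1}[v\in V(t_0^n)]$ and $\psi(v_{r-1})=\psi_N(v)$, this is the asserted value.

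The whole argument is elementary, as the phrase ``The following is elementary'' already promises. The two places that take a moment are the valuation estimate in part~\ref{part: canconj}, where the inequality $(2^{i-1}+2^{r-2}-2^{j-1})n\ge n$ is precisely what dictates the choices of $t_0$ and $L(n)$, and the change of variable absorbing the denominator $1+\alpha_{r-1}$ in part~\ref{part: charcan}; I do not expect either to present a genuine obstacle.
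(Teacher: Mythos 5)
The paper states this lemma without proof (``The following is elementary''), so there is no argument in the source to compare against; your job was to supply the verification, and you have done so correctly. The block decomposition $x(\underline{\alpha})=\bigl(\begin{smallmatrix}I+\alpha&0\\0&1\end{smallmatrix}\bigr)$, the valuation bookkeeping for $(U'^{-1}\alpha U')_{i,j}$ in part~\ref{part: canconj} (where the key inequality $(2^{i-1}+2^{r-2}-2^{j-1})n\ge n$ is exactly the design constraint on $t_0$ and $L(n)$), the identification of $\psi_N(v^{x(\underline{\alpha})})$ with $\psi\bigl((1+\alpha_{r-1})^{-1}(v_{r-1}-\sum_{j<r-1}\alpha_jv_j)\bigr)$, the coordinate-by-coordinate Gaussian-sum evaluation, and the measure-preserving substitution $\gamma=\alpha_{r-1}(1+\alpha_{r-1})^{-1}$ that handles the unit denominator, are all correct; the convention $g^h=h^{-1}gh$ you adopt matches the one used in the proof of Proposition~\ref{prop: effstab}. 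One small point worth noting: with the opposite convention the $(r-1)$-st entry of the conjugate is the linear expression $(1+\alpha_{r-1})v_{r-1}+\sum_{j<r-1}\alpha_jv_j$, which makes the $\alpha_{r-1}$-integral immediate and avoids the substitution entirely; but either route lands on the same answer, and your computation is sound as written.
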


\begin{proof}[Proof of Proposition \ref{prop: effstab}]
We prove the statement by induction on $r$. The case $r=1$ is trivial.
For the induction step, note that if $f$ is bi-invariant under $K_{r-1}(n)$ then
the function $h=\int_Vf(\cdot v)\psi_N(v)\ dv$ on $\GL_{r-1}$ is bi-$K_{r-2}(n)$-invariant
(since $\GL_{r-2}$ normalizes the character $\psi_N\rest_V$).
Therefore, by induction hypothesis we have
\begin{multline*}
\int_Nf(u)\psi_N(u)\ du=\int_Uh(u)\psi_N(u)\ du=
\int_{U(t_0^n)}h(u)\psi_N(u)\ du\\=\int_V\int_{U(t_0^n)}f(uv)\psi_N(uv)\ du\ dv.
\end{multline*}
Now we use Lemma \ref{lem: aux123}.
By part \ref{part: canconj}, Since $f$ is bi-$K_{r-1}(n)$-invariant, for any $\underline{\alpha}\in L(n)$
we can write the above as
\[
\int_V\int_{U(t_0^n)}f(ux(\underline{\alpha})vx(\underline{\alpha})^{-1})\psi_N(uv)\ du\ dv=
\int_V\int_{U(t_0^n)}f(uv)\psi_N(uv^{x(\underline{\alpha})})\ du\ dv.
\]
Averaging over $\underline{\alpha}\in L(n)$ and using part \ref{part: charcan},
we may replace the integration over $V$ by integration over $V(t_0^n)$. This yields the induction step.
\end{proof}

Let $\Pi_{\psi}=\ind_N^G\psi$. For $\varphi\in\Pi_{\psi}$ and $\varphi^\vee\in\Pi_{\psi^{-1}}$ let
\[
(\varphi,\varphi^\vee)_{N\bs G}=\int_{N\bs G}\varphi(g)\varphi^\vee(g)\ dg.
\]
Also set,
\[
A^\circ(n)=A\cap\ball(n)=\{\diag(t_1,\dots,t_r)\in A:q^{-n}\le\abs{t_i}\le q^n,\ \ i=1,\dots,r\}.
\]
Proposition \ref{prop: effstab}, together with the argument of \cite[Proposition 2.12]{MR3267120}, which
was communicated to us by Jacquet, yield the following.

\begin{corollary} \label{cor: suppindmc}
There exists a constant $c$, depending only on $r$ with the following property.
Assume that $\varphi\in\Pi_\psi^{K(n)}$ and $\varphi^\vee\in\Pi_{\psi^{-1}}^{K(n)}$
are both supported in $NA^\circ(n)\K$ for some $n\ge1$.
Then $(\Pi_\psi(\cdot)\varphi,\varphi^\vee)_{N\bs G}$ is supported in the ball $\ball(cn)$.
\end{corollary}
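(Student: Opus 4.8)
The plan is to reduce the statement to a direct application of Proposition~\ref{prop: effstab}. Start from the natural surjection $P\colon C_c^\infty(G)\twoheadrightarrow\Pi_\psi$, $(P\Phi)(g)=\int_N\Phi(ug)\psi_N(u)^{-1}\,du$, which intertwines right translation, and its analogue $P^\vee\colon C_c^\infty(G)\twoheadrightarrow\Pi_{\psi^{-1}}$. Picking $\Phi_1,\Phi_2\in C_c^\infty(G)$ with $P\Phi_1=\varphi$ and $P^\vee\Phi_2=\varphi^\vee$, a short unfolding of the two $N$-integrals gives
\[
(\Pi_\psi(g)\varphi,\varphi^\vee)_{N\bs G}=\int_N\psi_N(u)\,\Theta_g(u)\,du,\qquad \Theta_g(u)=\int_G\Phi_1(u^{-1}hg)\,\Phi_2(h)\,dh,
\]
with $\Theta_g\in C_c^\infty(G)$. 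If both lifts are left $K_{r-1}(n)$-invariant, then $\Theta_g$ is bi-$K_{r-1}(n)$-invariant, so Proposition~\ref{prop: effstab} applies to $f=\Theta_g$ with the same $n$.

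Granting such lifts supported in $\ball(n)$, the conclusion is a size count. Since $\ball(a)\ball(b)\subseteq\ball(a+b)$ and $\ball(\cdot)$ is stable under inversion, $\Theta_g(u)\neq0$ forces $g\in(\operatorname{supp}\Phi_2)^{-1}\,u\,(\operatorname{supp}\Phi_1)\subseteq\ball(n)\,u\,\ball(n)$; hence if $g\notin\ball((2^{r-1}+1)n)$ then $\Theta_g$ vanishes on $\ball((2^{r-1}-1)n)$. Because $N(t_0^n)\subseteq\ball((2^{r-1}-1)n)$, Proposition~\ref{prop: effstab} gives $\int_N\psi_N(u)\Theta_g(u)\,du=\int_{N(t_0^n)}\psi_N(u)\Theta_g(u)\,du=0$, so one may take $c=2^{r-1}+1$.

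The step I expect to require real work is the construction used above: given $\varphi\in\Pi_\psi^{K(n)}$ with $\operatorname{supp}\varphi\subseteq NA^\circ(n)\K$, one needs a lift $\Phi_1$ with $P\Phi_1=\varphi$ that is left $K_{r-1}(n)$-invariant and supported in $\ball(n)$ (and likewise for $\varphi^\vee$). The obvious choice $\sum_j c_j\mathbf{1}_{s_jK(n)}$ with $s_j\in A^\circ(n)\K$ is supported in $\ball(n)$ but not left $K_{r-1}(n)$-invariant, and one cannot fix this by averaging over $K_{r-1}(n)$ on the left, since $K_{r-1}(n)$ does not normalise $N$ and left averaging destroys the relation $P\Phi_1=\varphi$. (Right averaging over $K_{r-1}(n)$ does preserve it, because $\varphi$ is right $K(n)$-invariant and $K_{r-1}(n)\subseteq K(n)$, so right $K_{r-1}(n)$-invariance of the lifts is automatic.) The required left-invariant lifts can be taken supported in $K_{r-1}(n)A^\circ(n)\K$, which is left $K_{r-1}(n)$-invariant and still contained in $\ball(n)$ since $\norm{k_1a\kappa}\le\norm{a}\le q^n$ and the same bound holds for inverses; producing them is exactly the content of the argument of Jacquet reproduced in \cite[Proposition~2.12]{MR3267120}, which I would carry over essentially verbatim. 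Inserting it into the identity above gives the corollary with $c=2^{r-1}+1$.
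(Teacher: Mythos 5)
Your unfolding $(\Pi_\psi(g)\varphi,\varphi^\vee)_{N\backslash G}=\int_N\psi_N(u)\Theta_g(u)\,du$ is correct, as is the observation that bi-$K_{r-1}(n)$-invariance of $\Theta_g$ would follow from left $K_{r-1}(n)$-invariance of the lifts, and the support bound that then delivers the conclusion via Proposition~\ref{prop: effstab}. But the step you flag as "the real work" — producing lifts $\Phi_1,\Phi_2$ that are simultaneously left $K_{r-1}(n)$-invariant, supported in $\ball(n)$, and satisfy $P\Phi_1=\varphi$, $P^\vee\Phi_2=\varphi^\vee$ — is a genuine gap, not a routine one, and the appeal to \cite[Proposition~2.12]{MR3267120} does not fill it. You correctly note that left-averaging a naive lift over $K_{r-1}(n)$ destroys the property $P\Phi_1=\varphi$ because $K_{r-1}(n)$ does not normalize $N$; but more is true: left translation by $K_{r-1}(n)$ does not even preserve $\ker P$, so there is no a priori reason for the affine space $P^{-1}(\varphi)$ to contain a left $K_{r-1}(n)$-invariant element at all. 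Specifying a support set such as $K_{r-1}(n)A^\circ(n)\K$ does not by itself produce a function with the required properties, and you give no construction.

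The argument of Jacquet reproduced in \cite[Proposition~2.12]{MR3267120} is in fact the dual of what you attempt, and it is designed precisely to avoid the lift. One does not unfold $M(g)$; instead one tests $M$ against an arbitrary compactly supported bi-$K(n)$-invariant $f$ and unfolds the pairing to get
\[
\int_G M(g)f(g)\,dg=\int_{N\backslash G}\int_{N\backslash G}\varphi^\vee(x)\varphi(y)K_f(x,y)\,dy\,dx,\qquad K_f(x,y)=\int_N f(x^{-1}ny)\psi_N(n)\,dn.
\]
Here $x,y$ may be taken in $\ball(n)$ because of the support hypothesis on $\varphi,\varphi^\vee$, and then the function $g\mapsto f(x^{-1}gy)$ is \emph{automatically} bi-$K(3n)$-invariant (conjugation by $\ball(n)$ moves $K(3n)$ into $K(n)$), hence bi-$K_{r-1}(3n)$-invariant, so Proposition~\ref{prop: effstab} applies with no construction needed. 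It then shows $K_f(x,y)=0$ for $x,y\in\ball(n)$ whenever $f$ vanishes on $\ball(cn)$ with $c=3\cdot 2^{r-1}-1$, and since this holds for all such $f$, the corollary follows. This duality is exactly why the paper's version incurs a factor $3$ in the congruence level where yours would not: the price of not constructing lifts is a loss of invariance depth, absorbed harmlessly into the unspecified constant $c$. Your proposal would need a genuine new idea to close the lift-construction gap; absent one, the test-function formulation is the correct route.
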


Indeed, the function $M(g)=(\Pi_\psi(\cdot)\varphi,\varphi^\vee)_{N\bs G}$ is clearly bi-$K(n)$-invariant.
Let $f$ be a compactly supported bi-$K(n)$-invariant function on $G$. By Fubini's theorem
\begin{multline*}
\int_GM(g)f(g)\ dg=\int_G\int_{N\bs G}\varphi(xg)\varphi^\vee(x)f(g)\ dg\ dx=
\int_{N\bs G}\int_G\varphi(xg)\varphi^\vee(x)f(g)\ dg\ dx\\=
\int_{N\bs G}\int_G\varphi(g)\varphi^\vee(x)f(x^{-1}g)\ dg\ dx=
\int_{N\bs G}\int_{N\bs G}\varphi^\vee(x)\varphi(y)K_f(x,y)\ dy\ dx
\end{multline*}
where
\[
K_f(x,y)=\int_Nf(x^{-1}ny)\psi_N(n)\ dn.
\]
From Proposition \ref{prop: effstab} we infer that there exists $c$, depending only on $r$, such that
if $f$ vanishes on $\ball(cn)$ then $K_f(x,y)=0$ for all $x,y\in\ball(n)$. The corollary follows.


Finally, we prove Theorem \ref{thm: main}.
\begin{proof}[Proof of Theorem \ref{thm: main}]
Let $\pi$ be a supercuspidal irreducible representation of $G$.
Suppose that $W\in\whit^{\psi}(\pi)^{K(n)}$ and $W^\vee\in\whit^{\psi^{-1}}(\pi^\vee)^{K(n)}$.
By Theorem \ref{thm: main'}, both $W$ and $W^\vee$ are supported in $NA(cn)\K$ for suitable $c$.
Upon modifying $c$, we may write $W(g)=\int_ZW_0(zg)\omega_{\pi}^{-1}(z)\ dz$
(with $\vol(Z\cap\K)=1$) where $W_0\in\Pi_{\psi}^{K(n)}$ is supported in $NA^\circ(cn)\K$.
For instance we can take $W_0=W\one_X$ where $X$ is the set
\[
X=\{g\in G:0\le\val(\det g)<r\}.
\]
Similarly, write $W^\vee(g)=\int_ZW_0^\vee(zg)\omega_{\pi}(z)\ dz$ where $W_0^\vee\in\Pi_{\psi^{-1}}^{K(n)}$
is supported in $NA^\circ(cn)\K$.
Then up to a scalar
\[
(\pi(g)W,W^\vee)=\int_{ZN\bs G}W(xg)W^\vee(x)\ dx=
\int_Z(\Pi_\psi(zg)W_0,W_0^\vee)_{N\bs G}\ dz.
\]
The result therefore follows from Corollary \ref{cor: suppindmc}.
\end{proof}

\def\cprime{$'$} 
\providecommand{\bysame}{\leavevmode\hbox to3em{\hrulefill}\thinspace}
\providecommand{\MR}{\relax\ifhmode\unskip\space\fi MR }
\providecommand{\MRhref}[2]{%
  \href{http://www.ams.org/mathscinet-getitem?mr=#1}{#2}
}
\providecommand{\href}[2]{#2}

\bibliography{../Bibfiles/all}

\end{document}